\title{Catalan Numbers and Jacobi Polynomials}
\author{Thomas M. Richardson}
    \date{August 24, 2020}
    \newtheorem{theorem}{Theorem}[section]
    \newtheorem{lemma}{Lemma}[section]
    \newtheorem{corollary}{Corollary}[section]
    \theoremstyle{definition}
    \newtheorem{definition}{Definition}[section]
\newcommand{\seqnum}[1]{\href{https://oeis.org/#1}{\underline{#1}}}
\begin{document}

\maketitle

\begin{abstract}
We prove that the inverse of the Hankel matrix of 
 the reciprocals of the Catalan numbers has integer entries. We generalize the result to an infinite family of generalized Catalan numbers. The Hankel matrices that we consider are associated with orthogonal polynomials that are variants of Jacobi polynomials. Our proofs use these polynomials and computer algebra based on Wilf-Zeilberger theory.
\end{abstract}

\section{Introduction}
We prove the that the inverse of the Hankel matrix of the reciprocals of the Catalan numbers has integer entries. We prove the same result for an infinite family of generalized Catalan numbers. Based on the analogy of these matrices with the Hilbert matrix, we call the Hankel matrix of reciprocals of the Catalan numbers the {\it Catbert matrix}, a portmanteau of Catalan and Hilbert matrix. 

All matrices in this paper are indexed starting with 0, and are infinite unless otherwise stated. For an infinite matrix $M$, denote the $n\times n$ upper left submatrix by $M(n)$.

The original inspiration for this work is the the Hilbert matrix $H$, defined by $H=[ h_{i,j} ]$ with $h_{i,j}=\frac{1}{i+j+1}$, and the interesting fact that the inverse of the $n \times n$ Hilbert matrix has integer entries. Choi asked what sort of coincidence it is if the inverse of a matrix of reciprocals of integers has integer entries \cite{TRICKS}. The most general collection of sequences for which we prove that the corresponding Hankel matrices have integer inverses are sequences with generating functions of the form $g(x) = q(1-p^2x)^{q/p},$  where $p$ is an integer with $p\ge 2$, and $q$ is an integer relatively prime to $p$. The proof extends to their subsequences $(g_a, g_{a+1}, g_{a+2}, \ldots).$ 

Other matrices of reciprocals of integers that have been shown to have inverses with integer entries include the Filbert matrix and generalizations, based on reciprocals of Fibonacci numbers and generalized Fibonacci numbers \cite{FILBERT,KILICPRODINGER}; matrices based on reciprocals of sequences of binomial coefficients $b_n=\frac{1}{\alpha\binom{n+\alpha}{\alpha}}$ for a natural number $\alpha$ \cite{BERG}; and the matrices of reciprocals of the super Catalan and super Patalan numbers \cite{SUPERPAT_PUBLISHED}. 

Approaches to proving that the inverse matrices of reciprocals of integers have integer entries include orthogonal polynomials \cite{BERG}, and Wilf-Zeilberger (WZ) theory \cite{AeqB,FILBERT}. This paper will use both of these approaches in its proofs.

\section{Generalized Catbert Matrices}
First we define the sequences that we will show give rise to Hankel matrices of reciprocals of integers, whose inverses have integer entries.

\begin{definition}
\label{GENPATSEQ}
Let $p$ be an integer with $p\ge2$, and let $q$ be an integer that is relatively prime to $p$. Define the sequence of generalized Catalan numbers $g$ by 
\(g_n = p^{2n}\binom{n+q/p}{n}=(-p^2)^{n}\binom{-1-q/p}{n}\). 
We use $g^{(q/p)}=g$ when we want to be explicit about the values of the parameters $p$ and $q$.
\end{definition}
The sequences $g$ can be found as the non-zero entries of columns in a recursive matrix of super Catalan or super Patalan numbers \cite{LMMS, SUPERPAT_PUBLISHED}. The sequence of central binomial coefficients is $g^{(-1/2)}$ in this notation. The sequence of Catalan numbers is related to the sequence $g^{(-3/2)}$ as follows; if $g=g^{(-3/2)}$ and $C$ is the sequence of Catalan numbers, then $C_n = -\frac{1}{2}g_{n+1}.$

Theorems 9-11 of \cite{SUPERPAT_PUBLISHED} prove that the elements of $g$ are integers, with the qualification that the notation here is different, and the cited proofs only explicitly apply to a subset of the sequences that we consider here. The approach extends easily to the current context.

\begin{theorem}
The elements of the sequences $g^{(q/p)}$ defined in Definition \ref{GENPATSEQ} are integers.
\end{theorem}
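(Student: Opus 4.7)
The plan is to reduce the claim to an elementary $\ell$-adic valuation argument. First I would clear denominators in the definition: since
\[
\binom{n+q/p}{n} = \frac{(q/p+1)(q/p+2)\cdots(q/p+n)}{n!},
\]
multiplying numerator and denominator by $p^n$ rewrites $g_n$ as
\[
g_n = \frac{p^{n}\,(q+p)(q+2p)\cdots(q+np)}{n!}.
\]
So the task becomes: show that for every prime $\ell$,
\[
v_\ell\!\left(p^{n}\prod_{k=1}^{n}(q+kp)\right) \;\ge\; v_\ell(n!) \;=\; \sum_{i\ge 1}\left\lfloor n/\ell^{i}\right\rfloor,
\]
by Legendre's formula.

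Next I would split into two cases according to whether $\ell$ divides $p$. If $\ell \mid p$, then $\gcd(\ell,q)=1$ forces $v_\ell(q+kp)=0$ for every $k\ge 1$, so the left side equals $n\cdot v_\ell(p)\ge n$. Since $v_\ell(n!)\le n/(\ell-1)\le n$, this case is immediate. If instead $\gcd(\ell,p)=1$, then for each $i\ge 1$ the map $k\mapsto q+kp \pmod{\ell^{i}}$ is a bijection on residues mod $\ell^i$ (shifted AP with invertible step), so among $k=1,\dots,n$ the number of $k$ with $\ell^{i}\mid(q+kp)$ is at least $\lfloor n/\ell^{i}\rfloor$. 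Summing over $i$ gives
\[
v_\ell\!\left(\prod_{k=1}^{n}(q+kp)\right)\;=\;\sum_{i\ge1}\#\{k\le n : \ell^{i}\mid q+kp\}\;\ge\;\sum_{i\ge 1}\lfloor n/\ell^{i}\rfloor \;=\; v_\ell(n!),
\]
which settles this case.

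Combining the two cases gives $n! \mid p^{n}\prod_{k=1}^{n}(q+kp)$, so $g_n\in\mathbb{Z}$. The only real ``obstacle'' is making sure the bookkeeping in the $\ell\mid p$ case is tight enough (one uses $v_\ell(n!)<n/(\ell-1)$ so that a single factor $v_\ell(p)\ge 1$ per unit of $n$ suffices); everything else is a direct application of Legendre's formula plus the fact that an arithmetic progression whose common difference is prime to $\ell^{i}$ is equidistributed modulo $\ell^{i}$. This is essentially the argument of Theorems~9--11 of \cite{SUPERPAT_PUBLISHED} adapted to allow arbitrary integer $q$ coprime to $p$ in place of the specific values considered there.
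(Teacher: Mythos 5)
Your proof is correct, but it takes a genuinely different route from the paper. You clear denominators to write $g_n = p^{n}\prod_{k=1}^{n}(q+kp)/n!$ and then verify $v_\ell$ of the numerator dominates $v_\ell(n!)$ for every prime $\ell$, splitting on whether $\ell\mid p$: when $\ell\mid p$ the factor $p^n$ alone supplies $n\ge v_\ell(n!)$ powers of $\ell$ (since no $q+kp$ is divisible by $\ell$), and when $\ell\nmid p$ the progression $q+kp$ hits each residue class mod $\ell^{i}$ once per block of $\ell^{i}$ consecutive $k$, giving at least $\lfloor n/\ell^{i}\rfloor$ multiples of $\ell^{i}$ among the factors. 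Both cases check out (one should note the factors $q+kp$ are nonzero since $p\ge 2$ and $\gcd(p,q)=1$, so the valuations are finite, though $g_n=0$ would be harmless anyway). The paper instead proves integrality by combinatorial recurrences: it invokes Theorem 9 of the super Patalan paper to get a Catalan-like convolution recurrence for the case $q=-(p+1)$, handles $q=-(p+r)$ with $1<r<p$ as convolutional powers, and then uses two Pascal-rule identities to shift $q$ by multiples of $p$ and reach all admissible $(p,q)$. Your argument is more self-contained and elementary, needing only Legendre's formula and equidistribution of an arithmetic progression with invertible common difference, whereas the paper's argument leans on external results but exhibits the sequences as solutions of generalized Catalan recurrences, which carries extra combinatorial information. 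Your closing remark that this is ``essentially the argument of Theorems 9--11'' of the cited paper is not accurate --- those theorems argue via recurrences, not valuations --- but that mis-attribution does not affect the validity of your proof.
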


\begin{proof}
Let $p$ be as in definition \ref{GENPATSEQ}, let $q=-(p+1)$, and let $g=g^{(q/p)}$. Now define the sequence $a$ for $n\ge 0$ by $a_n=-\frac{1}{p}g_{n+1}$. Then by Theorem 9 of \cite{SUPERPAT_PUBLISHED}, the sequence $a$ satisfies the recurrence relation
\[ a_n = \sum_{k=2}^p(-p)^{k-2}\binom{p}{k} \prod_{i_1+\ldots+i_k=n-k+1} a(i_j).
\]
Since $a_0=1$, this implies that $a_n$ is an integer, and therefore the elements of $g$ are integers, as $g_0=1$ and $g_n=-pa_{n-1}$.

Similarly, Theorem 10 of \cite{SUPERPAT_PUBLISHED} implies that the elements of $g^{(q/p)}$ are integers for $q=-(p+r)$ and $1<r<p$, as they are convolutional powers of $g^{(-(p+1)/p)}$.

Finally, the well known identity
\[\binom{n}{k}=\binom{n-1}{k}+\binom{n-1}{k-1} \]
implies the equations
\begin{align}
\label{GINT1}
g^{(q/p)}_n &= g^{((q-p)/p)}_n +p^2g^{(q/p)}_{n-1} \\
\label{GINT2}
g^{(q/p)}_n &= g^{((q+p)/p)}_n-p^2g^{((q+p)/p)}_{n-1}.
\end{align}
Equations \eqref{GINT1} and \eqref{GINT2} allow us to extend the proof that the elements of $g^{(q/p)}$ are integers to all $p,q$ satisfying the conditions in Definition \ref{GENPATSEQ}.
\end{proof}

Now we define the Hankel matrices based on generalized Catalan numbers. We define these Hankel matrices based on the subsequences of such a sequence $g$, starting with the term $g_a$ where $a\ge 0$.
\begin{definition}
\label{GENPATHANKEL}
Let $g$ be a sequence of generalized Catalan numbers, and let $a$ be a non-negative integer. Let the matrix $G$ be the Hankel matrix of reciprocals of $g$, offset by $a$, so that  $G_{i,j} = 1/g_{i+j+a}$, for $0\le i, j$. We use $G(n)$ to denote the $n \times n$ upper left submatrix of $G$. We also use $G^{(a,q/p)}$ and $G^{(a,q/p)}(n)$ for $G$ and $G(n)$, respectively, when we want to make the parameters $a$, $p$, and $q$ explicit.
\end{definition}

The sequences $g^{(q/p)}$ are based on sequences of binomial coefficients $\binom{n+q/p}{n}$. Berg described shifted Jacobi polynomials that are orthogonal polynomials for the Hankel matrix with entries $ \displaystyle{ \frac{1}{\alpha\binom{i+j+\alpha}{\alpha}}}$, where $\alpha$ is a natural number \cite[Theorem 4.2]{BERG}. He used the relationship to express the inverse of the Hankel matrix as a product involving the coefficient matrix of the orthogonal polynomials. We consider a more general situation than Berg. The sequences of generalized Catalan numbers that we consider use the rational parameter $q/p$ instead of a natural number parameter $\alpha$, and also use an additional scaling factor $p^{2n}$. We also consider subsequences starting at an offset from the beginning of the sequence. We still use his general approach, based on the coefficient matrix of orthogonal polynomials. Our proofs will use computer algebra to show that the polynomials for the sequence $g^{(q/p)}$ are orthogonal. 

\section{Orthogonal polynomials for G}
We define a lower triangular matrix $L$, whose rows are the coefficients of orthogonal polynomials for the bilinear form defined by $G$. The coefficients are based on the coefficients of shifted Jacobi polynomials as defined by Hetyei \cite[equation (4)]{HETYEI}, and are then scaled by powers of $p^2$.
\begin{definition}
\label{CATBERTOPLOWERTRI}
Let $a$, $p$, and $q$ be as in definition \ref{GENPATHANKEL}. Define $L=L^{(a,q/p)}$ to be the lower triangular matrix with 
\begin{equation}
L_{n,k} = (-1)^{n+k}p^{2k}\binom{n+k+a+q/p-1}{k}\binom{n+a}{k+a}. 
\end{equation}
\end{definition}

To show that $L$ is a coefficient matrix of orthogonal polynomials for the bilinear form defined by $G$, we show that the product $LGL^T$ is a diagonal matrix.

We show this in two steps. First, we show that the rows of $L$ satisfy a three term recurrence. Second, we prove that the rows are orthogonal with respect to the bilinear form defined by $G$.

\section{Three Term Recurrence and Orthogonality}
We state and prove a three term recurrence relation for the rows of L.
\begin{lemma}
\label{CATBERTTHREETERM}
Let 
\begin{equation}
\alpha_n=-\frac{(n+a+1)(q+np)(q+2np++ap+3p)}{(n+2)(q+np+ap+p)(q+2np+ap+p)},
\end{equation}
 let 
\begin{equation}
\beta_n=-\frac{(q+2np+ap+2p)(2nq+aq+3q+2n^2p+2anp+4np+(a+1)^2p)}{(n+2)(q+np+ap+p)(q+2np+ap+p)},
\end{equation}
and let 
\begin{equation}
\gamma_n=\frac{p(q+2np+ap+2p)(q+2np+ap+3p)}{(n+2)(q+np+ap+p)}.
\end{equation}
Then 
\begin{equation}
\label{THREETERMEQN}
\alpha_nL(n,k)+\beta_nL(n+1,k)+\gamma_nL(n+1,k-1)=L(n+2,k).
\end{equation}
\end{lemma}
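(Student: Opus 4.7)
The identity \eqref{THREETERMEQN} is a pointwise identity in $k$ (for each fixed $n$), so the plan is to reduce it to a rational-function identity in $k$ and then to a polynomial identity of low degree.

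First, divide both sides of \eqref{THREETERMEQN} by $L(n+2,k)$. Using the explicit formula in Definition \ref{CATBERTOPLOWERTRI}, every ratio
\[\frac{L(n+i,k+j)}{L(n+2,k)}\]
that appears involves factorials whose arguments differ by at most two, and therefore collapses to a rational function of $k$, $n$, $a$, and $q/p$ whose numerator and denominator each factor into at most two linear forms in $k$. For instance,
\[\frac{L(n,k)}{L(n+2,k)}=\frac{(n+a+q/p)(n+a+q/p+1)(n-k+1)(n-k+2)}{(n+k+a+q/p)(n+k+a+q/p+1)(n+a+1)(n+a+2)}.\]
The other two ratios admit similar closed forms; the ratio $L(n+1,k-1)/L(n+2,k)$ additionally contributes a factor $k(k+a)/p^{2}$ coming from the shift in the second binomial and the change in the power of $p$.

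Second, clear denominators. The least common denominator of the three ratios is a polynomial of degree $2$ in $k$, so after scaling each numerator by the appropriate cofactor, the identity becomes the claim that two polynomials of degree at most $2$ in $k$ (with coefficients rational in $n$, $a$, $p$, $q$) are equal. Equivalently, the identity reduces to three coefficient identities in $n$, $a$, $p$, $q$, or to evaluation at three chosen values of $k$.

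Third, perform the algebraic verification. The coefficients $\alpha_n$, $\beta_n$, $\gamma_n$ stated in the lemma are themselves rational functions whose numerators and denominators factor into low-degree linear forms in $n$, $a$, $p$, $q$, so after substitution the identity reduces to two explicit polynomial expressions that must be shown equal. This is the main obstacle: the expansion is straightforward but lengthy and error-prone by hand, and is most safely handed to a computer-algebra system, consistent with the paper's announced use of Wilf--Zeilberger/computer-algebra tools. One could equally regard the identity as the output of a creative-telescoping search for a three-term $n$-recurrence on the hypergeometric template $L(n,k)$; in that framing, the same calculation that certifies the lemma also \emph{produces} the coefficients $\alpha_n$, $\beta_n$, $\gamma_n$.
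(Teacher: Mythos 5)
Your proposal is correct and follows essentially the same route as the paper: the paper's entire proof is a wxMaxima computer-algebra verification, and your reduction (divide by $L(n+2,k)$, collapse each ratio to a rational function of $k$, clear denominators, and check a degree-$2$ polynomial identity in $k$) is precisely the kind of hypergeometric-term normalization such a verification performs. Your sample ratio $L(n,k)/L(n+2,k)$ checks out, and the only remaining step --- the final polynomial expansion --- is the part both you and the paper delegate to the computer.
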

The proof of Lemma \ref{CATBERTTHREETERM} follows from calculations performed with the wxMaxima computer algebra system \cite{WXMAX,MAX}. The wxMaxima script to perform the calculations is in the ancillary file `CatalanJacobiTHREETERM.wxm' included with this article.

Next we show that the rows of $L$ are orthogonal with respect to the bilinear form defined by $G$, by showing that $LGL^T$ is a diagonal matrix. Specifically, we show that $LG$ is upper triangular, and that implies that $LGL^T$ is diagonal.

\begin{lemma}
\label{CATBERTORTHOGLEM}
Let $n$ be an integer with $n \ge 2$. The matrices $L$ and $G$ satisfy
\[\sum_{k=0}^n L_{n,k}G_{k,n-1}=0\] and
\[\sum_{k=0}^n L_{n,k}G_{k,n-2}=0\].
\end{lemma}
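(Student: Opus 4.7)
The plan is to massage each sum into a classical alternating-sign binomial identity, exploiting a cancellation that makes most of the $q/p$-dependence disappear. Setting $r=q/p$ for brevity, the unfolding
\[
L_{n,k}G_{k,m}=\frac{(-1)^{n+k}}{p^{2(m+a)}}\binom{n+a}{k+a}\,\frac{\binom{n+k+a+r-1}{k}}{\binom{k+m+a+r}{k+m+a}}
\]
is the starting point. Rewriting both binomials as ratios of $\Gamma$-functions, the factor $\Gamma(n+k+a+r)$ in the numerator meets $\Gamma(k+m+a+r+1)$ in the denominator. When $m=n-1$ these are equal and the entire $r$-dependence in $k$ cancels; when $m=n-2$ their ratio is the single linear factor $n+k+a+r-1$. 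Everything remaining is polynomial in $k$.

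After pulling out all constants that do not depend on $k$, the first sum of the lemma collapses to a nonzero multiple of
\[
\sum_{k=0}^{n}(-1)^{n+k}\binom{k+n+a-1}{n+a-1}\binom{n+a}{k+a},
\]
and the substitution $j=k+a$ rewrites it as
\[
(-1)^{n-a}\sum_{j=0}^{n+a}(-1)^{j}\binom{n+a}{j}\binom{j+n-1}{n+a-1}.
\]
The extension of the summation index from $[a,n+a]$ to $[0,n+a]$ is harmless because the polynomial $\binom{j+n-1}{n+a-1}$ has zeros at $j=0,1,\dots,a-1$. Since this polynomial has degree $n+a-1<n+a$ in $j$, the classical finite-difference identity $\sum_{j=0}^{N}(-1)^{j}\binom{N}{j}P(j)=0$ for $\deg P<N$ forces the sum to vanish.

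For $m=n-2$ the surviving factor $n+k+a+r-1=(n+a-1+r)+k$ splits the expression into two pieces whose polynomial parts, after the same substitution, have degrees $n+a-2$ and $n+a-1$ respectively. Both are still strictly less than $n+a$, so each piece dies by the same finite-difference identity, using $n\ge2$ to ensure the boundary zeros of the polynomial extensions absorb the added indices.

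The main obstacle is the algebraic bookkeeping of the $\Gamma$-cancellation and verifying the polynomial degree bounds case-by-case; once this is done the identities are immediate. A fully mechanical alternative, consistent with the treatment of Lemma \ref{CATBERTTHREETERM}, would be to run Zeilberger's algorithm in wxMaxima on each of the two sums and obtain a rational certificate that witnesses the vanishing directly, without the need to expose the polynomial structure by hand.
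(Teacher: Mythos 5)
Your proof is correct, but it takes a genuinely different route from the paper's: the paper gives no human-readable argument for this lemma at all, deferring entirely to a wxMaxima script in an ancillary file (the ``fully mechanical alternative'' you mention in your last sentence is essentially what the author actually does, here and for Lemma~\ref{CATBERTTHREETERM}). Your route --- cancel the Gamma-quotients so the $r$-dependence collapses to a low-degree polynomial factor, shift the index by $a$ using the zeros of $\binom{j+n-1}{n+a-1}$ at $j=0,\dots,a-1$, and invoke $\sum_{j=0}^{N}(-1)^{j}\binom{N}{j}P(j)=0$ for $\deg P<N$ --- checks out: for $m=n-1$ the quotient $\Gamma(n+k+a+r)/\Gamma(k+m+a+r+1)$ is identically $1$, for $m=n-2$ it is the single linear factor $n+k+a+r-1$, and in both cases the resulting polynomial in $j$ has degree at most $n+a-1<n+a$, while the terms added when extending the summation range vanish because of the factor $\prod_{i=1}^{m+a}(k+i)$ (this is where $n\ge 2$ is used, exactly as you say). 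What your approach buys is transparency and in fact more: the identical computation works for every $0\le m<n$, since $\Gamma(n+k+a+r)/\Gamma(k+m+a+r+1)$ is then a polynomial of degree $n-m-1$ in $k$ and the total degree is still $n+a-1<n+a$; this proves directly that $LG$ is upper triangular and would make the three-term recurrence of Lemma~\ref{CATBERTTHREETERM} and the induction step in Theorem~\ref{CATBERTOPTHEOREM} unnecessary for that purpose. What the paper's approach buys is uniformity with its other machine-verified identities and no by-hand bookkeeping. One small point to tighten: state explicitly that the $k$-independent constants you pull out (ratios of Gamma values and powers of $p$) are finite and well defined, which holds because $r=q/p$ is not an integer and so no Gamma factor sits at a pole.
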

\begin{proof}
The wxMaxima script to verify these equations is in the ancillary file `CatalanJacobiORTHOGONAL.wxm' included with this article.
\end{proof}

\begin{theorem}
\label{CATBERTOPTHEOREM}
The product $LGL^T$ is a diagonal matrix.
\end{theorem}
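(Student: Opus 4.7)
The plan is to reduce the theorem to showing that $LG$ is upper triangular. Since $G$ is Hankel it is symmetric, and hence $LGL^T$ is symmetric. Writing $(LGL^T)_{n,m}=\sum_j (LG)_{n,j}\,L_{m,j}$, the fact that $L_{m,j}=0$ for $j>m$ together with the desired $(LG)_{n,j}=0$ for $j<n$ forces the summand to vanish unless $n\le j\le m$. Hence $(LGL^T)_{n,m}=0$ whenever $n>m$, and by symmetry also whenever $n<m$.

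To see that $LG$ is upper triangular I would convert the three-term recurrence of Lemma~\ref{CATBERTTHREETERM} into a recurrence on the entries of $LG$. Multiplying equation~\eqref{THREETERMEQN} by $G_{k,j}$, summing over $k$, and using the Hankel identity $G_{k+1,j}=G_{k,j+1}$ to rewrite $\sum_k L_{n+1,k-1}G_{k,j}$ as $(LG)_{n+1,j+1}$, I obtain
\[
(LG)_{n+2,j}=\alpha_n(LG)_{n,j}+\beta_n(LG)_{n+1,j}+\gamma_n(LG)_{n+1,j+1}.
\]
The decisive feature of this recurrence is that if one measures an entry $(LG)_{n,j}$ by its sub-diagonal distance $d=n-j$, the three right-hand terms have distances $d-2$, $d-1$, and $d-2$ respectively, each strictly less than the distance $d$ of the left-hand side.

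I would then conclude by induction on $d\ge 1$ that $(LG)_{n,j}=0$ whenever $0\le j<n$. The base cases $d=1$ and $d=2$ are exactly the two identities of Lemma~\ref{CATBERTORTHOGLEM}, with the single edge case $n=1$, $j=0$ (which lies outside the stated range $n\ge 2$) dispatched by a direct two-entry computation using the ratio $g_{a+1}/g_a$. For $d\ge 3$, applying the recurrence with $n$ replaced by $n-2$ expresses $(LG)_{n,j}$ as a linear combination of entries at sub-diagonal distances $d-2$, $d-1$, and $d-2$, each of which is zero by the inductive hypothesis. I expect no substantive obstacle: the argument is a clean descent once the Hankel property has been exploited to produce the recurrence on $LG$; the only care needed is bookkeeping the low-index base cases so the induction actually closes at $d=3$.
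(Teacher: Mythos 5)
Your proof is correct and follows essentially the same route as the paper: establish that $LG$ is upper triangular by induction from the three-term recurrence of Lemma \ref{CATBERTTHREETERM} with base cases supplied by Lemma \ref{CATBERTORTHOGLEM}, then conclude by triangularity and the symmetry of $G$. You additionally spell out details the paper leaves implicit --- the Hankel shift converting $\sum_k L_{n+1,k-1}G_{k,j}$ into $(LG)_{n+1,j+1}$, and the $(LG)_{1,0}$ edge case not covered by the lemma's hypothesis $n\ge 2$ --- both of which check out.
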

\begin{proof}
The matrices $L$ and $G$ satisfy
$\sum_{k=0}^n L_{n,k}G_{k,m}=0$ for all $m$ with $0\le m < n$,
by Lemma \ref{CATBERTORTHOGLEM}, Lemma \ref{CATBERTTHREETERM}, and mathematical induction. This says that $LG$ is an upper triangular matrix. Since $G$ is symmetric, this implies that $GL^T$ is lower triangular. As $L$ is lower triangular, this implies that $LGL^T$ is both upper triangular and lower triangular, and thus $LGL^T$ is diagonal.
\end{proof}

\section{The Diagonal of the Product}
For the forthcoming proofs, we will need to know the value of the diagonal entries of $LGL^T$.

\begin{definition}
\label{NORMDEFIN}
Let $a$, $p$, and $q$ be as in definition \ref{GENPATHANKEL}.
Define $N$ to be the diagonal matrix with \[N^{(a,q/p)}_{n,n} = \frac{p^{2a}(2np+ap+q)\binom{n+a+q/p-1}{a}}{q\binom{n+a}{a}}.\]
\end{definition}

\begin{theorem}
\label{CATBERTNORM}
The matrices $L$, $G$, and $N$ satisfy 
\begin{equation} 
\label{CATBERTNORMEQN}
LGL^T=N^{-1}.
\end{equation}
\end{theorem}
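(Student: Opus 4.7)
My plan is to exploit Theorem \ref{CATBERTOPTHEOREM} to reduce the claim to a single identity about diagonal entries. Because $LGL^T$ is already known to be diagonal, and $N^{-1}$ is diagonal by construction, it suffices to show that the $(n,n)$ entries agree for each $n\ge 0$. Equivalently, I need to verify
\[
(LGL^T)_{n,n} \;=\; \frac{q\binom{n+a}{a}}{p^{2a}(2np+ap+q)\binom{n+a+q/p-1}{a}}.
\]

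First I would simplify $(LGL^T)_{n,n}$. Since $LG$ is upper triangular (this is established in the proof of Theorem \ref{CATBERTOPTHEOREM}) and $L^T$ has the row-$n$ entry $L_{n,j}$ supported on $j\le n$, the full double sum collapses to $(LG)_{n,n}\cdot L_{n,n}$. Using the definition of $L$ and the definition $G_{k,n}=1/g_{k+n+a}$, the quantity $(LG)_{n,n}$ is a single hypergeometric sum
\[
\sum_{k=0}^{n} \frac{(-1)^{n+k}p^{2k}\binom{n+k+a+q/p-1}{k}\binom{n+a}{k+a}}{g_{k+n+a}},
\]
where $g_{k+n+a}=p^{2(k+n+a)}\binom{k+n+a+q/p}{k+n+a}$. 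Writing the summand in this form makes $(LG)_{n,n}$ manifestly a proper hypergeometric sum in $k$.

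With $L_{n,n}=p^{2n}\binom{2n+a+q/p-1}{n}$ in hand, the target identity becomes
\[
L_{n,n}\sum_{k=0}^{n} \frac{(-1)^{n+k}p^{2k}\binom{n+k+a+q/p-1}{k}\binom{n+a}{k+a}}{p^{2(k+n+a)}\binom{k+n+a+q/p}{k+n+a}}
\;=\; \frac{q\binom{n+a}{a}}{p^{2a}(2np+ap+q)\binom{n+a+q/p-1}{a}}.
\]
I would clear the powers of $p$ and binomial prefactors by dividing both sides by $L_{n,n}p^{-2(n+a)}$, which leaves a sum of ratios of binomial coefficients on the left and a closed-form rational expression on the right. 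This is precisely the kind of identity handled by Wilf--Zeilberger theory: Zeilberger's algorithm will produce a first-order recurrence in $n$ for the left side, which one checks matches the recurrence satisfied by the right side, followed by verification of the base case $n=0$ (where the sum reduces to the single term $k=0$ and the identity is trivial to read off).

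The main obstacle is not the existence of the proof but the symbolic bookkeeping: the hypergeometric summand depends on the three parameters $a$, $p$, $q$, and the closed form on the right mixes these in an asymmetric way through $(2np+ap+q)$ and through the two binomials $\binom{n+a+q/p-1}{a}$ and $\binom{n+a}{a}$. I expect the cleanest path is to hand the identity to a computer algebra system, in the same spirit as the verifications of Lemmas \ref{CATBERTTHREETERM} and \ref{CATBERTORTHOGLEM}, by running Zeilberger's algorithm on the left-hand sum and then algebraically confirming that the resulting certificate matches the $n\mapsto n+1$ ratio of the right-hand side. The rest of the proof is then just invocation of Theorem \ref{CATBERTOPTHEOREM} to conclude $LGL^T=N^{-1}$.
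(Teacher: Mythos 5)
Your proposal follows essentially the same route as the paper: both reduce the claim, via the diagonality established in Theorem \ref{CATBERTOPTHEOREM} and the triangularity of $LG$ and $L$, to the single identity $L_{n,n}\sum_{k=0}^{n}L_{n,k}G_{k,n}=N^{-1}_{n,n}$, which is then certified by Zeilberger's algorithm in a computer algebra system. The only cosmetic difference is that the paper divides the summand by the conjectured value $N^{-1}_{n,n}$ and observes the resulting sum is constant in $n$, whereas you propose matching first-order recurrences plus a base case; these are equivalent executions of the same WZ verification.
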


We prove this theorem by a calculation with the Zeilberger algorithm as implemented in wxMaxima. By Lemma \ref{CATBERTORTHOGLEM}, the entries of the diagonal of $LGL^T$ are given by $L_{n,n}\sum_{k=0}^n L_{n,k}G_{k,n}$, so the theorem follows from the summation identity
\begin{equation}
    \sum_{k=0}^n L_{n,k}G_{k,n}L_{n,n} = N^{-1}_{n,n}.
\end{equation}
For an explanation of how the Zeilberger algorithm proves such a summation identity, see Section 2.3 of \cite{AeqB}, particularly the itemized list before Example 2.3.1. For the computer calculation, we divide the summand by the conjectured sum value $N^{-1}_{n,n}$, as in step 2 of the referenced list. In our case, the output from the function `Zeilberger' contains a first order recurrence with non-constant terms. As the coefficients of the recurrence are negatives of each other, it immediately shows that the sum over $k$ is constant as a function of $n$. The wxMaxima script to perform the calculations is in the ancillary file `CatalanJacobiDIAGONAL.wxm' included with this article.

\section{Lucas's Theorem and Variations}
For the next proofs, we need to use Lucas's theorem on binomial coefficients modulo a prime number, and some variations.

\begin{lemma}
\label{LUCASLEM}
Let $p$ be a prime number, and let $n$ and $k$ satisfy $n \equiv 0 \pmod{p}$, and $k \not\equiv 0 \pmod{p}$. Then $\binom{n}{k} \equiv 0 \pmod{p}$.
\end{lemma}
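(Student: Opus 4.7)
The plan is to derive this as an immediate corollary of Lucas's theorem in the standard base-$p$ form. Write $n = \sum_{i\ge 0} n_i p^i$ and $k = \sum_{i\ge 0} k_i p^i$ with $0 \le n_i, k_i < p$. Lucas's theorem states that
\[
\binom{n}{k} \equiv \prod_{i\ge 0} \binom{n_i}{k_i} \pmod{p}.
\]
The hypothesis $n \equiv 0 \pmod{p}$ forces $n_0 = 0$, while $k \not\equiv 0 \pmod{p}$ forces $k_0 \ge 1$. Hence $\binom{n_0}{k_0} = \binom{0}{k_0} = 0$, so the whole product vanishes mod $p$, giving the result.

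If one prefers to avoid invoking Lucas's theorem as a black box, a shorter self-contained argument is available via the absorption identity $k\binom{n}{k} = n\binom{n-1}{k-1}$. Writing $n = pm$, this yields $k\binom{pm}{k} = pm\binom{pm-1}{k-1}$, so $p$ divides $k\binom{pm}{k}$. Since $\gcd(k,p)=1$ by hypothesis, $p$ divides $\binom{pm}{k}$, as claimed. I would probably present this second argument because it is one line and does not require setting up base-$p$ expansions.

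There is essentially no obstacle; the statement is a very standard consequence of Lucas. The only small choice is whether to invoke Lucas's theorem wholesale (natural here given the section title ``Lucas's Theorem and Variations'') or to give the short absorption-identity proof. Since the section promises ``variations'' that presumably go beyond the classical statement, I expect the cleanest exposition is to quote Lucas's theorem and then spend the real work on the variations that follow.
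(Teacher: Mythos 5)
Your proposal is correct and takes essentially the same approach as the paper, whose entire proof is the one-line remark that the lemma is a simple consequence of Lucas's theorem; your base-$p$ derivation merely spells out that citation. Your alternative via the absorption identity $k\binom{n}{k}=n\binom{n-1}{k-1}$ is also valid and self-contained, but it is extra: the paper does not record any argument at this level of detail.
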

Lemma \ref{LUCASLEM} is a simple consequence of Lucas's theorem \cite[Theorem 1]{BAILEY}.

We next consider three variations of Lucas's theorem that involve binomial coefficients with non-integer parameters.
\begin{lemma}
\label{LUCASVAR1}
Let $q$ be a prime number, let $p$ be an integer with $\gcd(q,p)=1$, let $n$ be an integer such that 
$p^2n+pq \equiv 0 \pmod{q}$, 
and let $k$ be a positive integer such that $k \not\equiv 0 \pmod{q}$. 
Then $p^{2k}\binom{n+q/p}{k} \equiv 0 \pmod{q}$.
\end{lemma}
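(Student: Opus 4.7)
The plan is to reduce the claim to a $q$-adic valuation bound that follows from Legendre's formula together with a careful count of residue classes. First I would simplify the hypothesis: since $pq\equiv 0\pmod{q}$, the condition $p^{2}n+pq\equiv 0\pmod{q}$ is equivalent to $p^{2}n\equiv 0\pmod{q}$, which, because $\gcd(p,q)=1$, reduces to $n\equiv 0\pmod{q}$. Next I would clear denominators in the binomial, writing
\[
p^{2k}\binom{n+q/p}{k} \;=\; \frac{p^{k}\prod_{j=0}^{k-1}(pn+q-pj)}{k!}.
\]
Since $\gcd(p,q)=1$, the factor $p^{k}$ is a unit modulo $q$, so it suffices to prove the valuation estimate $v_{q}\!\left(\prod_{j=0}^{k-1}(pn+q-pj)\right)\ge v_{q}(k!)+1$, where $v_{q}$ denotes the $q$-adic valuation.

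For each $i\ge 1$, I would introduce
\[
S_{i}=\{\,j\in\{0,1,\ldots,k-1\}:q^{i}\mid pn+q-pj\,\},
\]
so that $\sum_{i\ge 1}|S_{i}|=\sum_{j=0}^{k-1}v_{q}(pn+q-pj)$, and the target becomes $\sum_{i\ge 1}|S_{i}|\ge v_{q}(k!)+1$. Because $p$ is invertible modulo $q^{i}$, the congruence $pj\equiv pn+q\pmod{q^{i}}$ pins $j$ down to a single residue class mod $q^{i}$; hence $|S_{i}|$ is the number of integers in $\{0,\ldots,k-1\}$ lying in one fixed residue class mod $q^{i}$. A direct block-decomposition argument shows this count is always at least $\lfloor k/q^{i}\rfloor$.

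The crux is to extract an extra $+1$ from the case $i=1$: since $n\equiv 0\pmod{q}$, the relevant residue class for $S_{1}$ is $j\equiv 0\pmod{q}$, and the hypothesis $k\not\equiv 0\pmod{q}$ forces $|S_{1}|=\lfloor k/q\rfloor+1$, exactly one more than the generic lower bound. Combining this bonus with Legendre's formula $v_{q}(k!)=\sum_{i\ge 1}\lfloor k/q^{i}\rfloor$ then gives
\[
\sum_{i\ge 1}|S_{i}|\;\ge\;\bigl(\lfloor k/q\rfloor+1\bigr)+\sum_{i\ge 2}\lfloor k/q^{i}\rfloor\;=\;v_{q}(k!)+1,
\]
and the lemma follows. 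The main obstacle is not mathematical depth but careful bookkeeping: one must confirm that the unique residue class attached to each $S_{i}$ genuinely contributes at least $\lfloor k/q^{i}\rfloor$ elements to $\{0,\ldots,k-1\}$, and that the $i=1$ class delivers the decisive extra element precisely under the hypotheses $n\equiv 0\pmod q$ and $k\not\equiv 0\pmod q$.
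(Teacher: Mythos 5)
Your proposal is correct and follows essentially the same route as the paper: both expand $p^{2k}\binom{n+q/p}{k}$ as the product $\prod_{j=0}^{k-1}\bigl(p^2(n-j)+pq\bigr)$ over $k!$ and show the numerator carries strictly more factors of $q$ than the denominator. The only difference is that where the paper asserts this valuation comparison ``by induction on $k$'' without detail, you carry it out explicitly by counting the residue classes hitting each $S_i$ and invoking Legendre's formula, correctly isolating the extra factor of $q$ from the $i=1$ class under the hypotheses $n\equiv 0$ and $k\not\equiv 0 \pmod q$ --- a complete version of the argument the paper only sketches.
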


\begin{proof}
We may express 
\begin{equation}
\label{LUCASVAR1EQN}
p^{2k}\binom{n+q/p}{k}=
\frac{\prod_{i=0}^{k-1}\bigl(p^2(n-i)+pq\bigr)}{\prod_{i=0}^{k-1}(i+1)}.
\end{equation}
By induction on $k$, it follows that the numerator of equation (\ref{LUCASVAR1})
is always divisible by more powers of $q$ than the denominator, except when 
$k \equiv 0 \pmod{q}.$.
\end{proof}

\begin{lemma}
\label{LUCASVAR2}
Let $q=-3$, let $p=2$, 
let $n$ satisfy $n \equiv 2 \pmod{3}$,
and let $k$ satisfy 
$k \equiv 2 \pmod{3}$. 
Then $4^k\binom{n+k-3/2}{k} \equiv 0 \pmod{3}$.
\end{lemma}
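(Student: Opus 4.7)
The plan is to follow the strategy of Lemma~\ref{LUCASVAR1}: expand $4^k\binom{n+k-3/2}{k}$ via equation~\eqref{LUCASVAR1EQN} with $n$ replaced by $n+k$, then track the $3$-adic valuation. Pulling out a factor of $2^k$ from the numerator gives
\[
4^k\binom{n+k-3/2}{k} \;=\; \frac{2^k(2n-1)(2n+1)\cdots(2n+2k-3)}{k!},
\]
a product of $k$ consecutive odd integers divided by $k!$. Writing $n=3m+2$ and $k=3t+2$, the leading factor $2n-1 = 6m+3$ is divisible by $3$; since consecutive odd integers cycle through all three residues modulo $3$, the factors of the numerator divisible by $3$ are exactly those at positions $\ell = 0,3,6,\ldots,3t$, and each equals $3(2m+2s+1)$ for $s=0,\ldots,t$. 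Thus $v_3$ of the numerator equals $(t+1)+v_3(R)$, where
\[
R \;:=\; (2m+1)(2m+3)\cdots(2m+2t+1),
\]
while Legendre's formula yields $v_3((3t+2)!) = t + v_3(t!)$ (using $\lfloor (3t+2)/3^{i+1}\rfloor = \lfloor t/3^i\rfloor$ for $i\ge 1$). The lemma therefore reduces to proving $v_3(R) \ge v_3(t!)$ for all $m,t\ge 0$.

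For this reduced claim I would use the identity
\[
R \;=\; \frac{(2m+2t+1)!\,m!}{2^t\,(m+t)!\,(2m)!},
\]
which follows from separating the odd and even factors of $(2m+2t+1)!/(2m)!$. Setting $I(m,t) := 2^t R/t! = (2m+2t+1)!\,m!/(t!\,(m+t)!\,(2m)!)$ and noting $v_3(2^t)=0$, the required inequality $v_3(R)\ge v_3(t!)$ becomes $v_3(I(m,t))\ge 0$, which follows once $I(m,t)$ is shown to be a non-negative integer. Integrality can be verified prime by prime via Legendre's formula: for each prime $p$ and each $i\ge 1$, non-negativity of the Legendre summand reduces to the elementary floor inequality
\[
\lfloor 2x+2y\rfloor + \lfloor x\rfloor \;\ge\; \lfloor 2x\rfloor + \lfloor y\rfloor + \lfloor x+y\rfloor \qquad (x,y\ge 0),
\]
applied with $x = m/p^i$ and $y = t/p^i$; this inequality is dispatched by a two-case analysis on whether $\{x\}+\{y\}<1$ or $\{x\}+\{y\}\ge 1$.

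The main obstacle is this integrality claim for $I(m,t)$. Although $I(m,t)$ admits the compact form $(2m+2t+1)\binom{2m+2t}{m+t}\binom{m+t}{m}/\binom{2m}{m}$, no obvious binomial identity visibly cancels the denominator $\binom{2m}{m}$, so the cleanest route is the uniform Legendre / floor-inequality argument sketched above. An alternative would be to leverage the classical integrality of the super-Catalan numbers $(2m)!(2t)!/(m!\,t!\,(m+t)!)$, from which integrality of $I(m,t)$ can be deduced, but the prime-by-prime approach is the most direct.
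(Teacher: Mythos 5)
Your argument is correct, but it takes a genuinely different route from the paper. The paper's proof is a two-line reduction: it applies Pascal's rule to write
\[
4^k\binom{n+k-3/2}{k} = 4^k\binom{n+k-1-3/2}{k}+4^k\binom{n+k-1-3/2}{k-1},
\]
and observes that since $n+k-1 \equiv 0 \pmod{3}$ while $k$ and $k-1$ are both nonzero mod $3$, Lemma \ref{LUCASVAR1} applies to each term on the right; the single downward shift of the upper index moves the integer part into the residue class where Lemma \ref{LUCASVAR1} already gives divisibility by $3$, and no new valuation computation is needed. You instead compute the $3$-adic valuation directly: you count the multiples of $3$ among the $k$ consecutive odd integers in the numerator, reduce the lemma to $v_3(R)\ge v_3(t!)$, and dispatch that by proving integrality of $I(m,t)=(2m+2t+1)!\,m!/\bigl(t!\,(m+t)!\,(2m)!\bigr)$ via Legendre's formula and the floor inequality $\lfloor 2x+2y\rfloor+\lfloor x\rfloor\ge\lfloor 2x\rfloor+\lfloor y\rfloor+\lfloor x+y\rfloor$. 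I checked the steps --- the factorization of $R$, the identity $v_3((3t+2)!)=t+v_3(t!)$, and the two-case fractional-part analysis all hold --- so the argument is sound, and it even yields the stronger fact that $I(m,t)$ is an integer for all $m,t\ge 0$. What it costs is length and an auxiliary integrality lemma that the paper avoids entirely; what it buys is independence from Lemma \ref{LUCASVAR1} (your count is self-contained) and an explicit handle on the exact power of $3$ dividing $4^k\binom{n+k-3/2}{k}$, which the Pascal-rule reduction does not provide.
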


\begin{proof}
We may express 
\begin{equation}
\label{LUCASVAR2EQN}
4^k\binom{n+k-3/2}{k} = 4^k\binom{n+k-1-3/2}{k}+4^k\binom{n+k-1-3/2}{k-1}.\end{equation}
Now by Lemma \ref{LUCASVAR1}, 
$4^k\binom{n+k-1-3/2}{k}$ and $4^k\binom{n+k-1-3/2}{k-1}$
are both divisible by $3$, therefore
$4^k\binom{n+k-3/2}{k}$ is also divisible by $3$.
\end{proof}

\begin{lemma}
\label{LUCASVAR3}
Let $p$ be a prime number, let $q$ be an integer with $\gcd(q,p)=1$, let $n$ be an integer, and let $k$ be a positive integer.
Then $p^{2k}\binom{n+q/p}{k} \equiv 0 \pmod{p}$.
\end{lemma}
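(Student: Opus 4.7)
The plan is to follow the same opening identity as in the proof of Lemma~\ref{LUCASVAR1}, namely
\[
p^{2k}\binom{n+q/p}{k} \;=\; \frac{p^{k}\prod_{i=0}^{k-1}\bigl(p(n-i)+q\bigr)}{k!},
\]
obtained by multiplying $k$ copies of $p/p$ into the falling factorial. Unlike the previous lemma, here there are no mod-$q$ hypotheses to exploit and no induction is needed; the bound is small enough to be read off directly from $p$-adic valuations.

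Next I would count $v_p$ of numerator and denominator separately. Every factor $p(n-i)+q$ is congruent to $q \pmod{p}$, and $\gcd(p,q)=1$, so each of the $k$ factors in the product is a $p$-adic unit. Combined with the explicit prefactor $p^{k}$, the numerator contributes exactly $v_p(\text{num})=k$. Legendre's formula gives $v_p(k!)=(k-s_p(k))/(p-1)$, where $s_p(k)\ge 1$ denotes the sum of the base-$p$ digits of $k$. Subtracting,
\[
v_p\!\left(p^{2k}\binom{n+q/p}{k}\right) \;\ge\; k - \frac{k-s_p(k)}{p-1} \;=\; \frac{k(p-2)+s_p(k)}{p-1}.
\]

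The final step is to check that this quantity is at least $1$ for every prime $p$ and positive integer $k$. When $p\ge 3$, the inequality $k(p-2)\ge p-2$ together with $s_p(k)\ge 1$ yields a value of at least $p-1$, hence at least $1$. When $p=2$, the linear term vanishes and the bound becomes $s_2(k)\ge 1$, which still suffices. The only nontrivial point is noticing that the $p=2$ case forces the argument to lean on $s_p(k)\ge 1$ rather than on the $k(p-2)$ term, so the base-$p$ digit-sum refinement of Legendre's formula (rather than the cruder bound $v_p(k!)\le k/(p-1)$) is essential; everything else is routine.
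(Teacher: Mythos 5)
Your proposal is correct and follows essentially the same route as the paper: factor out $p^k$ from the numerator (whose remaining factors are $p$-adic units since each is $\equiv q \pmod{p}$) and bound $v_p(k!)$ via Legendre's formula, using $s_p(k)\ge 1$ — i.e., $v_p(k!)\le (k-1)/(p-1)$ — to cover the $p=2$ case. Your write-up is in fact somewhat more careful than the paper's, whose displayed product omits the $-i$ in the factors and misstates the index range, but the underlying argument is identical.
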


\begin{proof}
We express $p^{2k}\binom{n+q/p}{k}$ as 
\[\prod_{i=0}^k \frac{p^2n+pq}{i+1}. \]
In this expression, the numerator is divisible by $p^k$, while the largest power of $p$ that divides the denominator is at most $p^{(k-1)/(p-1)}$ by Legendre's formula \cite[Equation (1.2)]{STRAUB}.
\end{proof}

\section{Main Theorems}

\begin{theorem}
\label{MAINTHEOREM}
Let $a$, $p$, $q$, and $G^{(a,q/p)}(n)$ be as in Definitions \ref{GENPATSEQ} and \ref{GENPATHANKEL}. Then 

\noindent
(1) if $q=\pm 1$, the entries of the inverse of the matrix $G^{(a,q/p)}(n)$ are integers.

\noindent
(2) if $q=\pm 2$, the entries of the inverse of the matrix $G^{(a,q/p)}(n)$ are integers.
\end{theorem}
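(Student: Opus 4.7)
The plan is to invert the identity $LGL^T = N^{-1}$ established in Theorem \ref{CATBERTNORM}. Restricting to the $n\times n$ upper-left block, $L(n)$ is lower triangular with nonzero diagonal and hence invertible, and $L(n)G(n)L(n)^T = N(n)^{-1}$ rearranges to $G(n)^{-1} = L(n)^T N(n) L(n)$; explicitly,
\[
(G(n)^{-1})_{i,j} = \sum_{k=\max(i,j)}^{n-1} L_{k,i}\, L_{k,j}\, N_{k,k}.
\]
Hence the theorem reduces to showing this sum is an integer for every $0 \le i,j \le n-1$.

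First I would verify that each matrix entry $L_{n,k}$ is itself an integer. Writing
\[
p^{2k}\binom{m+q/p}{k} = p^{k}\cdot \frac{\prod_{i=0}^{k-1}(pm+q-ip)}{k!},
\]
every prime $\ell \mid p$ is absorbed by the factor $p^k$ together with Legendre's bound on $v_\ell(k!)$, as in Lemma \ref{LUCASVAR3}, while any prime $\ell \nmid p$ divides exactly $\lfloor k/\ell^j \rfloor$ of the terms $pm+q-ip$ for each $j\ge 1$ (the progression has step coprime to $\ell$), yielding $v_\ell(\text{numerator})\ge v_\ell(k!)$. The same argument shows $p^a\binom{k+a+q/p-1}{a}$ is an integer, so the denominator of $N_{k,k}$ divides $q\binom{k+a}{a}$.

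The heart of the proof is then to show that the aggregate denominator $q\binom{k+a}{a}$ of the summands vanishes from the sum. For part (1) with $q = \pm 1$, only the factor $\binom{k+a}{a}$ must be cleared. For part (2) with $q = \pm 2$, one additionally needs to clear a factor of $2$; here $p$ is odd (since $\gcd(p,q)=1$), so Lemma \ref{LUCASVAR1} applies with the prime $2$ and forces the binomial factors $p^{2i}\binom{k+i+a+q/p-1}{i}$ appearing in $L_{k,i}$ to vanish modulo $2$ whenever the appropriate residue conditions on $k$ and $i$ hold. For an odd prime $\ell$ dividing $\binom{k+a}{a}$, the strategy is to combine Lucas's theorem (Lemma \ref{LUCASLEM}) --- which pinpoints the residue classes where $\binom{k+a}{a}\equiv 0\pmod{\ell}$ --- with the residue information supplied by the numerator binomials $\binom{k+i+a+q/p-1}{i}$, $\binom{k+j+a+q/p-1}{j}$, and $\binom{k+a+q/p-1}{a}$, so that each summand acquires the required $v_\ell$ (or else cancels with a partner summand).

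I expect the main obstacle to be this final $\ell$-adic bookkeeping. Individual summands in $\sum_k L_{k,i} L_{k,j} N_{k,k}$ really are non-integral rationals, so the argument must genuinely use the summation: either through a residue-class pairing that cancels $\ell$-valuations in aggregate, or --- more likely given the paper's reliance on Wilf--Zeilberger methods --- by producing a closed form or hypergeometric certificate for the inner sum from which integrality can be read off directly.
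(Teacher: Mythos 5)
Your setup is right---$G(n)^{-1}=L(n)^T N(n) L(n)$, $L$ has integer entries, and the denominator of $N_{k,k}$ divides $q\binom{k+a}{a}$---but the proposal stops exactly where the proof actually lives. You name the clearing of $q\binom{k+a}{a}$ as ``the heart of the proof'' and ``the main obstacle,'' and then offer only candidate strategies (residue-class pairing across summands, or a WZ certificate for the inner sum) without carrying any of them out. The paper's resolution is a specific algebraic identity you do not find: Lemma \ref{MKLEMMA}, $NL=MK$, which rests on the binomial ``transposition''
\[
\frac{\binom{n+a+q/p-1}{a}\binom{n+k+a+q/p-1}{k}\binom{n+a}{k+a}}{\binom{n+a}{a}}
=\binom{n+k+a+q/p-1}{k+a}\binom{n}{k},
\]
absorbing the troublesome $\binom{n+a}{a}$ into a product of two manifestly integral factors. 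After rewriting $G(m)^{-1}=L(m)^T M(m) K(m)$ with $M_{i,i}=(2ip+ap+q)/q$ and $K$ an integer matrix, \emph{each individual summand} $L_{i,n}M_{i,i}K_{i,k}$ has denominator dividing $q$ alone. This also shows your guiding intuition is wrong: you assert the summands ``really are non-integral rationals, so the argument must genuinely use the summation,'' but for $q=\pm 1$ every summand is already an integer, and for $q=\pm 2$ the paper proves termwise divisibility by $2$; no cancellation between summands and no closed form for the inner sum is needed.

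For part (2) your instinct to use Lemma \ref{LUCASVAR1} at the prime $2$ is in the right direction, but the paper's case analysis also needs the symmetric form $L_{i,n}M_{i,i}K_{i,k}=K_{i,n}M_{i,i}L_{i,k}$ so that both $\binom{i}{k}$ (from $K$) and $\binom{i+a}{k+a}$ (from $L$) are available: when $a$ is odd and $i,k$ have different parity one of these vanishes mod $2$ by Lucas, and when they have the same parity one of $k$, $k+a$ is odd so Lemma \ref{LUCASVAR1} kills the corresponding rational binomial. Without the $K$ matrix and the identity \eqref{NLMKEQN2} none of this bookkeeping is accessible, so as written the proposal has a genuine gap at its central step.
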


From equation \eqref{CATBERTNORMEQN}, it follows that
the $n\times n$ upper left submatrix $G(n)$ satisfies
\begin{equation}
\label{GEQN}
G(n) = L(n)^{-1}N(n)^{-1}(L(n)^T)^{-1}.
\end{equation} 
Thus for the $n\times n$ upper left submatrix $G(n)$ we have
\begin{equation}
\label{GINVEQUATION}
G(n)^{-1}=L(n)^{T}N(n)L(n). 
\end{equation} 
We now introduce an alternate expression for the product $N(n)L(n)$ that will allow us to remove the binomial coefficient term in the denominators of the entries of $N(n)$.

\begin{definition}
\label{MKDEFIN}
Define the diagonal matrix $M$ by 
\[M^{(a,q/p)}_{n,n} = \frac{(2np+ap+q)}{q}.\]
Define the lower triangular matrix $K$ by 
\[
K^{(a,q/p)}_{n,k} = (-1)^{n+k}p^{2(k+a)}\binom{n+k+a+q/p-1}{k+a}\binom{n}{k}. 
\]
\end{definition}

\begin{lemma}
\label{MKLEMMA}
The matrices $K$, $L$, $M$, and $N$ satisfy
\begin{equation}
\label{NLMKEQN}
NL = MK.
\end{equation}
\end{lemma}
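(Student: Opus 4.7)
The plan exploits the fact that $N$ and $M$ are both diagonal, so left-multiplication of $L$ (resp.\ $K$) by $N$ (resp.\ $M$) simply rescales row $n$ by the single scalar $N_{n,n}$ (resp.\ $M_{n,n}$). Thus the matrix identity $NL=MK$ is equivalent to the entrywise scalar identity $N_{n,n}L_{n,k}=M_{n,n}K_{n,k}$ for all $0\le k\le n$, and the whole lemma reduces to verifying one closed-form identity in $n$, $k$, $a$, $p$, $q$. No Wilf--Zeilberger machinery is needed, in contrast with the preceding lemmas.

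After substituting the formulas from Definitions \ref{NORMDEFIN} and \ref{MKDEFIN}, the factor $(-1)^{n+k}(2np+ap+q)p^{2(k+a)}/q$ appears on both sides and cancels. Clearing the $\binom{n+a}{a}$ denominator leaves the purely binomial identity
\[
\binom{n+a+q/p-1}{a}\binom{n+k+a+q/p-1}{k}\binom{n+a}{k+a}
=\binom{n+a}{a}\binom{n+k+a+q/p-1}{k+a}\binom{n}{k}.
\]

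To finish, I would apply the subset-of-a-subset identity $\binom{n+a}{k+a}\binom{k+a}{a}=\binom{n+a}{a}\binom{n}{k}$ to swap the ratio $\binom{n+a}{k+a}/\binom{n+a}{a}$ for $\binom{n}{k}/\binom{k+a}{a}$. The factor $\binom{n}{k}$ then cancels from both sides, leaving
\[
\binom{n+a+q/p-1}{a}\binom{n+k+a+q/p-1}{k}=\binom{k+a}{a}\binom{n+k+a+q/p-1}{k+a}.
\]
Writing each of the binomials with fractional upper parameter as a rising factorial---e.g.\ $\binom{n+a+q/p-1}{a}=(n+q/p)(n+q/p+1)\cdots(n+q/p+a-1)/a!$---both sides collapse to $(n+q/p)(n+q/p+1)\cdots(n+q/p+a+k-1)/(a!\,k!)$ via the Pochhammer splitting rule $(x)_{a+k}=(x)_a(x+a)_k$, completing the proof.

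The main obstacle is really just bookkeeping: keeping straight the offset $a$, the rational shift $q/p$, and the three different binomial factors on each side while lining up the rising-factorial factorizations. There is no conceptual difficulty, and in particular the argument works for all $p$ and $q$ permitted by Definition \ref{GENPATSEQ} with no case distinctions.
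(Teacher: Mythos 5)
Your proposal is correct and follows essentially the same route as the paper: reduce to the entrywise identity $N_{n,n}L_{n,k}=M_{n,n}K_{n,k}$, cancel the common factors, and verify the remaining binomial-coefficient identity. The only difference is that the paper simply asserts the key step $\binom{n+a+q/p-1}{a}\binom{n+k+a+q/p-1}{k}\binom{n+a}{k+a}\big/\binom{n+a}{a}=\binom{n+k+a+q/p-1}{k+a}\binom{n}{k}$ in its chain of equalities, whereas you justify it explicitly via the subset-of-a-subset identity and the Pochhammer splitting $(x)_{a+k}=(x)_a(x+a)_k$ --- a welcome addition of detail, not a different argument.
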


\begin{proof}
Since $N$ and $M$ are diagonal, it is sufficient to prove that 
\begin{equation}
\label{NLMKEQN2}
N_{n,n}L_{n,k} = M_{n,n}K_{n,k}.
\end{equation}
The $(n,k)$ entry of $NL$ is given by  
\begin{align}
    N_{n,n}&L_{n,k} 
    \\ = & 
    \frac{p^{2a}(2np+ap+q)\binom{n+a+q/p-1}{a}}{q\binom{n+a}{a}}
    (-1)^{n+k}p^{2k}\tbinom{n+k+a+q/p-1}{k}\tbinom{n+a}{k+a}
    \\ = &
    (-1)^{n+k}p^{2(k+a)}(2np+ap+q)\frac{\binom{n+a+q/p-1}{a}\binom{n+k+a+q/p-1}{k}\binom{n+a}{k+a}}{q\binom{n+a}{a}}
    \\ = &
    (-1)^{n+k}p^{2(k+a)}(2np+ap+q)\frac{\binom{n+k+a+q/p-1}{k+a}\binom{n}{k}}{q}
    \\ = &
    \frac{2np+ap+q}{q}(-1)^{n+k}p^{2(k+a)}\tbinom{n+k+a+q/p-1}{k+a}\tbinom{n}{k}
    \\ = &
    M_{n,n}K_{n,k},
\end{align}
and the last expression is the $(n,k)$ entry of $MK$.  
\end{proof}
From Lemma \ref{MKLEMMA} we can rewrite the factorization of $G(m)^{-1}_{n,k}$ in equation (\ref{GINVEQUATION}) as
\begin{equation}
\label{LMKEQN}
G(m)^{-1}=L(m)^TM(m)K(m).
\end{equation}
We can express the $(n,k)$ entry of $G(m)^{-1}$ as
\begin{equation}
\label{LMKEQN3}
G(m)^{-1}_{n,k}=\sum_{i=n}^{m-1} L(m)_{i,n}M(m)_{i,i}K(m)_{i,k}.
\end{equation}
By symmetry and equation \eqref{NLMKEQN2}, we have  
\begin{equation}
\label{LMKEQN4}
L(m)_{i,n}M(m)_{i,i}K(m)_{i,k}=K(m)_{i,n}M(m)_{i,i}L(m)_{i,k}.
\end{equation}
Writing out the full expression of the products in equation \eqref{LMKEQN4}, the left side expands as
\begin{equation}
\label{LMKPRODEQN}
\begin{split}
    L_{i,n}M_{i,i}K_{i,k} &\\
    = &(-1)^{n+k}(p^2)^{n+k+a}
{\textstyle  \frac{(2ip+ap+q)}{q}
            \binom{i+n+q/p-1}{n}\binom{i+a}{n+a} 
             \binom{i+k+q/p-1}{k+a}\binom{i}{k}}, \\
\intertext{and the right side expands as}
    K_{i,n}M_{i,i}L_{i,k} &\\
    = &(-1)^{n+k}(p^2)^{n+k+a}
{\textstyle  \frac{(2ip+ap+q)}{q} 
            \binom{i+n+q/p-1}{n+a}\binom{i}{n} 
             \binom{i+k+q/p-1}{k}\binom{i+a}{k+a}}.
\end{split}    
\end{equation}

\begin{proof} (of Theorem \ref{MAINTHEOREM}.)
Using equations \eqref{LMKEQN}-\eqref{LMKPRODEQN} to prove the theorem we have to show that $q$ divides one of the terms of the numerator of a right-hand side of equation \eqref{LMKPRODEQN}. Case (1) is obvious. For case (2), we show that $2$ divides the numerator of one side of equation \eqref{LMKEQN4}, in particular, that $2$ divides either the numerator of $M_{i,i}$, $K_{i,k}$, or $L_{i,k}.$
If $a$ is even, then $2$ divides the numerator of $M(m)_{i,i}$, so we may assume that $a$ is odd. Now if $i$ and $k$ have different parity, then one of $\binom{i}{k}$ or $\binom{i+a}{k+a}$ is divisible by $2$ by Lemma \ref{LUCASLEM}. Hence $2$ divides either $K_{i,k}$ or $L_{i,k},$ respectively.

Now we may assume that $i$ and $k$ have the same parity, still assuming that $a$ is odd. Now either $k$ or $k+a$ is odd. By Lemma \ref{LUCASVAR1} this implies that $2$ divides either 
$p^{2k}\binom{i+k+a+q/p-1}{k}$ or $p^{2(k+a)}\binom{i+k+a+q/p-1}{k+a}$, hence $2$ divides either $L_{i,k}$ or $K_{i,k},$ respectively.

Thus in every case, we have shown that $2$ divides the summand in equation \eqref{LMKEQN3}, so $2$ divides $G^{-1}_{n,k}$, and hence $G(m)^{-1}$ is an integer matrix when $q=\pm 2$.
\end{proof}

\begin{theorem}
\label{MAINTHEOREM2}
Let $a$, $p$, $q$, and $G^{(a,q/p)}(n)$ be as in Definitions \ref{GENPATSEQ} and \ref{GENPATHANKEL}. Then the entries of the inverse of the matrix $\frac{1}{q}G^{(a,q/p)}(n)$ are integers.
\end{theorem}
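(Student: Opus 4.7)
The plan is to factor $\bigl(\tfrac{1}{q}G(m)\bigr)^{-1}=qG(m)^{-1}$ using the decomposition $G(m)^{-1}=L(m)^TM(m)K(m)$ from \eqref{LMKEQN} and \eqref{LMKEQN3}, giving
$$\bigl(qG(m)^{-1}\bigr)_{n,k}=\sum_{i=n}^{m-1}L(m)_{i,n}\,\bigl(qM(m)_{i,i}\bigr)\,K(m)_{i,k}.$$
The prefactor $q$ clears the denominator of $M_{i,i}=(2ip+ap+q)/q$, leaving the integer $2ip+ap+q$. Expanding $L_{i,n}$ and $K_{i,k}$ by their definitions, the only factors that are not manifestly integers are $p^{2n}\binom{i+n+a+q/p-1}{n}$ (from $L_{i,n}$) and $p^{2(k+a)}\binom{i+k+a+q/p-1}{k+a}$ (from $K_{i,k}$); the accompanying binomials $\binom{i+a}{n+a}$ and $\binom{i}{k}$ are ordinary integer binomial coefficients. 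Both suspect factors have the common shape $p^{2s}\binom{M+q/p}{s}$ with integer $M$ and non-negative integer $s$, so the theorem reduces to a single integrality claim.

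The claim is: \emph{for coprime integers $p\ge 2$ and $q$, any integer $M$, and any non-negative integer $s$, the quantity $p^{2s}\binom{M+q/p}{s}$ is an integer.} This is the main obstacle, because it is a strict generalization of Lemma~\ref{LUCASVAR3}: that lemma requires $p$ prime and concludes divisibility by $p$, whereas here we need only non-negativity of $v_\ell$ for every prime $\ell$ but must allow composite $p$. I would prove the claim by a prime-by-prime valuation argument on the identity
$$p^{2s}\binom{M+q/p}{s}=\frac{p^s\prod_{j=0}^{s-1}\bigl(p(M-j)+q\bigr)}{s!}.$$
For a prime $\ell$ dividing $p$, coprimality of $p$ and $q$ gives $v_\ell\bigl(p(M-j)+q\bigr)=0$, so $v_\ell$ of the numerator is exactly $s\,v_\ell(p)\ge s$, comfortably exceeding Legendre's bound $v_\ell(s!)\le(s-1)/(\ell-1)\le s-1$. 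For a prime $\ell$ not dividing $p$, the $s$ factors $p(M-j)+q$ form an arithmetic progression with step coprime to $\ell$, so for each $r\ge 1$ at least $\lfloor s/\ell^r\rfloor$ of them are divisible by $\ell^r$; summing over $r$ gives that $v_\ell$ of the product is at least $v_\ell(s!)$, as required.

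With the integrality claim established, each summand in the formula for $\bigl(qG(m)^{-1}\bigr)_{n,k}$ is a product of integers, so the sum is an integer. This proves that every entry of $\bigl(\tfrac{1}{q}G(m)\bigr)^{-1}$ is an integer, establishing the theorem.
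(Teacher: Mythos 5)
Your proof is correct and follows essentially the same route as the paper: both rest on the factorization $\bigl(\tfrac{1}{q}G(m)\bigr)^{-1}=L(m)^T\bigl(qM(m)\bigr)K(m)$ from Lemma \ref{MKLEMMA}, after which everything reduces to the integrality of the entries of $L$, $qM$, and $K$. The one real difference is that the paper simply asserts that $L$ and $K$ are integer matrices, whereas you isolate and prove the needed claim --- that $p^{2s}\binom{M+q/p}{s}$ is an integer for any integer $M$ and $s\ge 0$ when $\gcd(p,q)=1$ --- by a prime-by-prime valuation argument (using coprimality for primes dividing $p$, and the arithmetic-progression count against $v_\ell(s!)$ for the rest); this argument is sound and fills in a step the paper leaves implicit, since its Theorem on the integrality of $g^{(q/p)}$ and Lemma \ref{LUCASVAR3} cover only special cases of it.
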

\begin{proof}  
By Lemma \ref{MKLEMMA}, it follows that
\[ \bigl( \frac{1}{q}G(m)\bigr)^{-1} 
= qG(m)^{-1} = L(m)^T\bigl(qM(n)\bigr)K(m).\]
Since $qM$, $L$, and $K$ are integer matrices, it follows that
\[\bigl( \frac{1}{q}G(m)\bigr)^{-1}\] is an integer matrix.
\end{proof}

We now use equation \eqref{LMKEQN} to express the determinant of 
$\bigl(G(n)^{(a,q/p)}\bigr)^{-1}$ as a product of the diagonal elements of $M{(a,q/p)}$, $L{(a,q/p)}$, and $K{(a,q/p)}$, omitting the factors that are identically equal to $1$.

\begin{theorem}
\label{DETTHM}
The determinant of the matrix $\bigl(G(n)^{(a,q/p)}\bigr)^{-1}$ is given by
\begin{multline}
\label{DETEQN1}
\det \Bigl(\bigl(G(n)^{(a,q/p)}\bigr)^{-1}\Bigr)
\\ =
\prod_{k=0}^{n-1}
(p^2)^{2k+a} \frac{2kp+ap+q}{q}\binom{2k+a+q/p-1}{k}\binom{2k+a+q/p-1}{k+a},
\end{multline}
and the determinant of the matrix $\bigl(\frac{1}{q}G(n)^{(a,q/p)}\bigr)^{-1}$ is given by
\begin{multline}
\label{DETEQN2}
\det \Bigl(\bigl(\frac{1}{q}G(n)^{(a,q/p)}\bigr)^{-1}\Bigr)
\\ =
\prod_{k=0}^{n-1}
(p^2)^{2k+a} (2kp+ap+q)\binom{2k+a+q/p-1}{k}\binom{2k+a+q/p-1}{k+a}.
\end{multline}
\end{theorem}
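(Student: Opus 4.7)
The plan is to use the factorization from equation \eqref{LMKEQN}, namely $G(n)^{-1}=L(n)^T M(n) K(n)$, and the fact that the determinant of a product equals the product of determinants. Both $L(n)$ and $K(n)$ are lower triangular, so $\det(L(n))=\det(L(n)^T)$ equals the product of its diagonal entries, and likewise for $K(n)$; and $M(n)$ is diagonal. So $\det\bigl(G(n)^{-1}\bigr)$ is simply the product over $k=0,\ldots,n-1$ of $L_{k,k} M_{k,k} K_{k,k}$.

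The next step is to read off these diagonal entries from Definitions \ref{CATBERTOPLOWERTRI} and \ref{MKDEFIN}. Setting $n=k$ in the formula for $L_{n,k}$ gives $L_{k,k}=p^{2k}\binom{2k+a+q/p-1}{k}\binom{k+a}{k+a}=p^{2k}\binom{2k+a+q/p-1}{k}$, since $\binom{k+a}{k+a}=1$ and the sign $(-1)^{2k}$ vanishes. Similarly $K_{k,k}=p^{2(k+a)}\binom{2k+a+q/p-1}{k+a}\binom{k}{k}=p^{2(k+a)}\binom{2k+a+q/p-1}{k+a}$, and $M_{k,k}=(2kp+ap+q)/q$ by definition. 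Combining the powers of $p$ as $p^{2k}\cdot p^{2(k+a)}=(p^2)^{2k+a}$ and multiplying the three factors yields precisely the $k$-th factor in equation \eqref{DETEQN1}, so taking the product over $k=0,\ldots,n-1$ establishes the first formula.

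For the second formula I have two equivalent routes, and either works. The direct route observes that for an $n\times n$ matrix $\det\bigl((\tfrac1q G(n))^{-1}\bigr)=q^n\det\bigl(G(n)^{-1}\bigr)$, and distributing one factor of $q$ into each of the $n$ terms $M_{k,k}=(2kp+ap+q)/q$ clears the denominator and yields equation \eqref{DETEQN2}. Alternatively, as in the proof of Theorem \ref{MAINTHEOREM2}, I can write $\bigl(\tfrac1q G(m)\bigr)^{-1}=L(m)^T(qM(m))K(m)$ and compute the determinant of this product directly; the diagonal entries of $qM$ are $2kp+ap+q$, and the same computation as above gives the stated product.

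There is no real obstacle here: the argument is purely bookkeeping once the triangular factorization \eqref{LMKEQN} and the explicit entries of $L$, $M$, and $K$ are in hand. The only care needed is to verify that the diagonal entries of the triangular factors $L$ and $K$ simplify correctly (the binomial coefficients $\binom{k+a}{k+a}$ and $\binom{k}{k}$ collapsing to $1$), and to track the combined exponent $(p^2)^{2k+a}$ so that the final product matches the stated formulas.
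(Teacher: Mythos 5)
Your proof is correct and follows essentially the same route as the paper: both use the factorization $G(n)^{-1}=L(n)^TM(n)K(n)$ from equation \eqref{LMKEQN} and reduce the determinant to the product of the diagonal entries of the triangular and diagonal factors. You simply supply the explicit diagonal computations ($L_{k,k}$, $M_{k,k}$, $K_{k,k}$ and the exponent bookkeeping) that the paper leaves implicit.
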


\begin{proof}
Equation \eqref{LMKEQN} implies that the determinant of \(\bigl(G(n)^{(a,q/p)}\bigr)^{-1}\)
is the product of the diagonals of $L(n)^{(a,q/p)}$, $M(n)^{(a,q/p)}$, and $K(n)^{(a,q/p)}$. Similarly, the determinant of \(\bigl(\frac{1}{q}G(n)^{(a,q/p)}\bigr)^{-1}\)
is the product of the diagonals of $L(n)^{(a,q/p)}$, $qM(n)^{(a,q/p)}$, and $K(n)^{(a,q/p)}$.
\end{proof}

\section{The Catbert Matrix}
The Catbert matrix is the Hankel matrix of reciprocals of the Catalan numbers. The sequence \(g^{(-3/2)} = 1,-2,-2,-4,-10,\ldots\) is related to the Catalan numbers; \(g^{(-3/2)}\) is the sequence of Catalan numbers, multiplied by $-2$, and prepended by \(1\). We express the Catbert matrix in terms of a matrix $G^{(a,q/p)}$ by using the offset $1$ and multiplying by $-2$.

\begin{definition}
\label{CATBERTDEFIN}
The {\em Catbert matrix} is the matrix \( C \) given by 
\( C=-2G^{(1,-3/2)}=-2G \).
\end{definition}
It follows that
\( L^{(1,-3/2)} \)
is a coefficient matrix of orthogonal polynomials for the Catbert matrix \( C \). 
\begin{theorem}

\label{CATBERTTHM}
The inverse of the Catbert matrix has integer entries.
\end{theorem}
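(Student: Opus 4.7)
The plan is to apply the factorization $G(m)^{-1} = L(m)^T M(m) K(m)$ from equation \eqref{LMKEQN} with parameters $(a,p,q) = (1,2,-3)$. Since $C(m) = -2\, G^{(1,-3/2)}(m)$ and $-3 M_{i,i} = 4i-1$, the Catbert inverse has the explicit entries
\[
C(m)^{-1}_{n,k} \;=\; \frac{1}{6}\sum_{i=\max(n,k)}^{m-1} (4i-1)\, L_{i,n}\, K_{i,k},
\]
with $L$ and $K$ integer matrices. Thus the task reduces to showing that this sum is divisible by $6$, i.e.\ by both $2$ and $3$, for every $n$ and $k$.

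For divisibility by $2$, I would clear the half-integer denominators of $\binom{i+k-3/2}{k+1}$ against the factor $4^{k+1}$ appearing in $K_{i,k}$ and apply Legendre's formula: the resulting fraction has $2$-adic valuation $s_2(k+1) + v_2\bigl(\binom{i}{k}\bigr) \ge 1$. Hence $K_{i,k}$ is always even, and since $4i-1$ is odd, every summand is automatically divisible by $2$.

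Divisibility by $3$ is the heart of the proof and the main obstacle. Because $4\equiv 1 \pmod 3$, the factor $4i-1$ makes any summand with $i\equiv 1 \pmod 3$ vanish modulo $3$, so it suffices to establish the per-term congruence $L_{i,n}K_{i,k} \equiv 0 \pmod 3$ whenever $i\equiv 0$ or $i\equiv 2 \pmod 3$. The product $L_{i,n}K_{i,k}$ contains the four binomial factors $\binom{i+1}{n+1}$, $\binom{i}{k}$, $4^n\binom{i+n-3/2}{n}$, and $4^{k+1}\binom{i+k-3/2}{k+1}$, so I would split into cases on residues modulo $3$. When $i\equiv 0 \pmod 3$, Lemma \ref{LUCASLEM} applied to $\binom{i}{k}$ handles $k\not\equiv 0\pmod 3$, and Lemma \ref{LUCASVAR1} (with lemma-parameters $q=3$, $p=2$, and lemma-$n = i+k-3$) applied to $4^{k+1}\binom{i+k-3/2}{k+1}$ handles $k\equiv 0\pmod 3$. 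When $i\equiv 2 \pmod 3$, Lemma \ref{LUCASLEM} applied to $\binom{i+1}{n+1}$ handles $n\not\equiv 2\pmod 3$, and Lemma \ref{LUCASVAR2} applied to $4^n\binom{i+n-3/2}{n}$ handles the remaining boundary case $n\equiv 2\pmod 3$. Lemma \ref{LUCASVAR2} appears to have been introduced precisely for this boundary residue class $(i,n)\equiv(2,2)\pmod 3$, which none of the ordinary Lucas-type lemmas covers.

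Combining the two divisibilities yields $6 \mid \sum_i (4i-1)L_{i,n}K_{i,k}$, which proves $C(m)^{-1}_{n,k}\in\mathbb{Z}$.
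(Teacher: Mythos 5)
Your proposal is correct and follows essentially the same route as the paper: the factorization $G(m)^{-1}=L(m)^TM(m)K(m)$ specialized to $(a,q/p)=(1,-3/2)$, divisibility of $K_{i,k}$ by $2$ via Legendre's formula (the content of Lemma \ref{LUCASVAR3}), and the identical five-way case split on $i,k \bmod 3$ using Lemmas \ref{LUCASLEM}, \ref{LUCASVAR1}, and \ref{LUCASVAR2}. The only cosmetic difference is that you apply the Lucas-type lemmas to the factor $L_{i,n}$ directly in the $i\equiv 2 \pmod 3$ cases, where the paper switches to $L_{i,k}$ via the symmetry of equation \eqref{LMKEQN4}; both are valid.
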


\begin{proof}
Let $L = L^{(1,-3/2)}$, $K = K^{(1,-3/2)}$, and $M = M^{(1,-3/2)}$. By definitions \ref{CATBERTOPLOWERTRI}, \ref{NORMDEFIN}, and \ref{MKDEFIN}, we have
\begin{equation}
    L_{i,k} = (-1)^{i+k}4^{k}\binom{i+k-3/2}{k}\binom{i+1}{k+1},
\end{equation}
\begin{equation}
    K_{i,k} = (-1)^{i+k}4^{k+1}\binom{i+k-3/2}{k+1}\binom{i}{k}, 
\end{equation}
and
\begin{equation}
    M_{i,i} = \frac{4i-1}{3}. 
\end{equation}
Now \( C(m)^{-1} = \frac{1}{2}\bigl(G^{(1,-3/2)}\bigr)^{-1} = \frac{1}{2}L(m)^TM(m)K(m) \).
We need to show that both $2$ and $3$ divde some factor of the numerator of 
\[G(m)^{-1}=L(m)^TM(m)K(m)=K(m)^TM(m)L(m).\] Lemma \ref{LUCASVAR3} implies that $2$ divides $K_{i,k}.$

To prove divisibility by $3$, we use equations \eqref{LMKEQN3} and \eqref{LMKEQN4} as in the proof of Theorem \ref{MAINTHEOREM}. If $i\equiv 1 \pmod{3}$ then $3$ divides $4i-1$, and so $3$ divides the numerator of $M_{i,i}$.
If $i\equiv 0 \pmod{3}$ and $k \not\equiv 0 \pmod{3}$, then $3$ divides $\binom{i}{k}$  by Lemma \ref{LUCASLEM}, and thus $3$ divides $K_{i,k}$. 
If $i\equiv 2 \pmod{3}$ and $k \not\equiv 2 \pmod{3}$, then $3$ divides $\binom{i+1}{k+1}$  by Lemma \ref{LUCASLEM}, and thus $3$ divides $L_{i,k}$. 
If $i\equiv 0 \pmod{3}$ and $k \equiv 0 \pmod{3}$, then $3$ divides $4^{k}\binom{i+k-3/2}{k+1}$  by Lemma \ref{LUCASVAR1}, and thus $3$ divides $K_{i,k}$. 
If $i\equiv 2 \pmod{3}$ and $k \equiv 2 \pmod{3}$, then $3$ divides $4^{k}\binom{i+k-3/2}{k}$  by Lemma \ref{LUCASVAR2}, and thus $3$ divides $L_{i,k}$. 
So for all possible values of $i\pmod{3}$ and $k\pmod{3}$, we have shown that $3$ divides the numerator of \(L(m)^TM(m)K(m)\). Hence \(C^{-1}\), the inverse of the Catbert matrix, has integer entries.
\end{proof}

We can now express the determinant of the inverse Catbert matrix $C(n)^{-1}$ as a product of the diagonal elements of $M$, $L$, and $K$, omitting the factors that are identically equal to $1$.

\begin{corollary}
The determinant of the inverse Catbert matrix $C(n)^{-1}$ is given by
\begin{equation}
\label{CATBERTDET}
det\bigl(C(n)^{-1}\bigr)=\prod_{k=0}^{n-1}
4^{2k+1}\frac{4k-1}{6}\binom{2k-3/2}{k}\binom{2k-3/2}{k+1}.
\end{equation}
\end{corollary}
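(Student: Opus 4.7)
The strategy is to apply Theorem \ref{DETTHM} with the specific parameter values $a=1$, $p=2$, $q=-3$ that correspond to the Catbert matrix, and then account for the scalar factor $-2$ in the relation $C = -2G^{(1,-3/2)}$.

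First I would substitute $(a,p,q) = (1,2,-3)$ into equation \eqref{DETEQN1}. Under this substitution we get $(p^2)^{2k+a} = 4^{2k+1}$, the factor $\frac{2kp+ap+q}{q}$ becomes $\frac{4k-1}{-3}$, the quantity $2k+a+q/p-1$ simplifies to $2k-3/2$, and the second binomial has upper argument $k+a = k+1$. This yields
\begin{equation*}
\det\bigl(G^{(1,-3/2)}(n)^{-1}\bigr) = \prod_{k=0}^{n-1} 4^{2k+1}\,\frac{4k-1}{-3}\,\binom{2k-3/2}{k}\binom{2k-3/2}{k+1}.
\end{equation*}

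Next I would pass from $G^{(1,-3/2)}(n)^{-1}$ to $C(n)^{-1}$. Since $C(n) = -2\,G^{(1,-3/2)}(n)$ as an $n\times n$ matrix, taking determinants gives $\det(C(n)) = (-2)^n \det(G^{(1,-3/2)}(n))$, and inverting,
\begin{equation*}
\det\bigl(C(n)^{-1}\bigr) = \frac{(-1)^n}{2^n}\,\det\bigl(G^{(1,-3/2)}(n)^{-1}\bigr).
\end{equation*}
Finally I would combine the prefactor $(-1)^n/2^n$ with the factor $\prod_{k=0}^{n-1}(-1/3) = (-1)^n/3^n$ coming from the $1/(-3)$ in each term of the product. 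The two signs cancel and the remaining $1/(2^n\cdot 3^n) = 1/6^n$ is absorbed into the product as $\prod_{k=0}^{n-1} 1/6$, distributing one factor of $1/6$ into each term alongside $(4k-1)$. This gives exactly
\begin{equation*}
\det\bigl(C(n)^{-1}\bigr)=\prod_{k=0}^{n-1} 4^{2k+1}\,\frac{4k-1}{6}\,\binom{2k-3/2}{k}\binom{2k-3/2}{k+1},
\end{equation*}
which is the claimed formula.

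The argument is essentially a substitution followed by bookkeeping of signs and scalar prefactors, so there is no real obstacle; the only thing that needs care is tracking the $(-1)^n$ factors from $(-2)^n$ and from the $n$ copies of $\frac{4k-1}{-3}$ to confirm they combine to a positive $1/6^n$. All the nontrivial content is already packaged in Theorem \ref{DETTHM}, whose applicability is immediate once one checks that the parameter triple $(1,2,-3)$ satisfies the conditions of Definition \ref{GENPATHANKEL}.
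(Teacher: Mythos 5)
Your proposal is correct and follows essentially the same route as the paper: specialize the $L^TMK$ determinant factorization (Theorem \ref{DETTHM}) to $(a,p,q)=(1,2,-3)$ and absorb the scalar from $C=-2G^{(1,-3/2)}$. Your explicit tracking of the two $(-1)^n$ factors is in fact slightly more careful than the paper, which silently writes $M_{i,i}=\frac{4i-1}{3}$ rather than $\frac{4i-1}{-3}$ and does not spell out the sign cancellation.
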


The sequence of determinants of the inverse Catbert matrix is sequence \seqnum{A296056}
in the {\em On-Line Encyclopedia of Integer Sequences} \cite{OEIS}.

\end{document}